\DeclareSymbolFont{bbold}{U}{bbold}{m}{n}
\DeclareSymbolFontAlphabet{\mathbbold}{bbold}
\newcommand{\N}{\mathbb{N}}
\newcommand{\R}{\mathbb{R}}
\newcommand{\C}{\mathbb{C}}
\newcommand{\K}{\mathbb{K}}
\newcommand{\1}{\mathbbold{1}}
\newcommand{\loc}{\mathrm{loc}}
\DeclareMathOperator{\grad}{grad}
\DeclareMathOperator{\diverg}{div}
\renewcommand{\div}{\diverg}
\DeclareMathOperator{\spt}{spt}
\DeclareMathOperator{\capacity}{cap}
\renewcommand{\Cap}{\capacity}
\providecommand{\form}{\tau}
\providecommand{\scpr}[2]{\left( #1 \,\middle|\, #2 \right)}
\renewcommand{\sp}{\scpr}
\newcommand{\from}{\colon}
\let\phi\varphi
\let\leq\leqslant
\let\geq\geqslant
\DeclareMathOperator{\tr}{tr}
\def\@row#1,{#1\@ifnextchar;{\@gobble}{&\@row}}
\def\@matrix{%
    \expandafter\@row\my@arg,;%
    \@ifnextchar({\\ \get@in@paren{\@matrix}}{\after@matrix}%
    }
\def\matrixtype#1#2#3{%
    \ifmmode\def\after@matrix{\end{#2}\right#3}%
    \else\def\after@matrix{\end{#2}\right#3$}$\fi\iffalse$\fi
    \left#1\begin{#2}\get@in@paren{\@matrix}%
    }
\def\@column#1,{#1\@ifnextchar;{\@gobble}{\\ \@column}}
\newcommand\vect{}
\def\svect(#1){\left(\begin{smallmatrix}\@column#1,;\end{smallmatrix}\right)}
\def\vect{\get@in@paren{\@vect}}
\def\@vect{\left(\begin{matrix}\expandafter\@column\my@arg,;\end{matrix}\right)}
\def\get@in@paren#1({\def\my@arg{}\def\my@rest{}\def\after@get{#1}\get@arg}
\let\e@a\expandafter
\def\get@arg#1){\e@a\kl@test\my@rest#1(;}
\def\kl@test#1(#2;{\e@a\def\e@a\my@arg\e@a{\my@arg#1}%
                   \ifx:#2:\let\my@exec\after@get
                   \else\let\my@exec\get@arg
                        \e@a\def\e@a\my@arg\e@a{\my@arg(}%
                        \def@rest#2;%
                   \fi\my@exec}
\def\def@rest#1(;{\def\my@rest{#1\kl@zu}}
\def\kl@zu{)}
\newcommand\MyPairedDelimiter{%
  \@ifstar{\My@Paired@Delimiter{{}}}
          {\My@Paired@Delimiter{}}%
}
\newcommand\My@Paired@Delimiter[4]{%
  \newcommand#2{%
    \@ifstar{\start@PD{#1}{\delimitershortfall=-1sp}{#3}{#4}}
            {\start@PD{#1}{}{#3}{#4}}%
  }%
}
\newcommand\start@PD[5]{%
  #1\mathopen{\mathpalette\put@delim@helper{\put@delim{#2}{#3}{.}{#5}}}%
  #5%
  \mathclose{\mathpalette\put@delim@helper{\put@delim{#2}{.}{#4}{#5}}}%
}
\newcommand\put@delim@helper[2]{%
  \hbox{$\m@th\nulldelimiterspace=0pt #2#1$}%
}
\newcommand\put@delim[5]{%
  \setbox\z@\hbox{$\m@th#5{#4}$}%
  \setbox\tw@\null
  \ht\tw@\ht\z@ \dp\tw@\dp\z@
  #1#5%
  \left#2\box\tw@\right#3%
}
\MyPairedDelimiter*{\abs}{\lvert}{\rvert}
\MyPairedDelimiter*{\norm}{\lVert}{\rVert}
\MyPairedDelimiter{\set}{\{}{\}}
\theoremstyle{plain} % default
\newtheorem{theorem}{Theorem}[section]
\newtheorem{proposition}[theorem]{Proposition}
\theoremstyle{definition}
\newtheorem{example}[theorem]{Example}
\newtheorem*{definition}{Definition}
\newtheorem{remark}[theorem]{Remark}
\begin{document}

\medmuskip=4mu plus 2mu minus 3mu
\thickmuskip=5mu plus 3mu minus 1mu
\belowdisplayshortskip=9pt plus 3pt minus 5pt

\title{Dirichlet forms for singular diffusion in higher dimensions}

\author{Uta Freiberg and Christian Seifert\footnote{corresponding author}}

\maketitle

\begin{abstract}
  We describe singular diffusion in bounded subsets $\Omega$ of $\R^n$ by form 
methods and characterize the associated operator. We also prove positivity 
and contractivity of the corresponding semigroup.
  This results in a description of a stochastic process moving according to classical diffusion in one part of $\Omega$,
  where jumps are allowed through the rest of $\Omega$.
  
\end{abstract}

\noindent
Keywords: singular diffusion, Dirichlet forms, submarkovian semigroups, jump-diffusion process\\
MSC 2010: 47D06, 47A07, 35Hxx, 35J70, 60J45

\section{Introduction}

The aim of this paper is to present a treatment of multi-dimensional ``singular'' diffusion in the framework of Dirichlet forms.
Singular diffusion (sometimes called gap-diffusion) in one dimension goes back at least to Feller \cite{Feller1954} and has a long history, see e.g. \cite{LangerSchenk1990} and references therein.
%However, there seems to be no multi-dimensional analogue in the literature.

To describe singular diffusion, we consider a suitable measure $\mu$ on an open and bounded subset $\Omega\subseteq\R^n$,
and let particles move in $\Omega$ according to ``Brownian motion'', where the particles may only be located in the support $\spt\mu$ of $\mu$.
Furthermore, the particles are accelerated or slowed down by the ``speed measure'' $\mu$.
If $\mu$ is supported only on a proper subset of $\Omega$, 
in terms of the stochastic process describing the motion of a particle this yields a time changed process (on $\spt\mu$), see \cite[Section 6.2]{FukushimaOshimaTakeda1994}.
In terms of the Dirichlet form we may also see that as a trace of the corresponding Dirichlet space \cite[Section 6.2]{FukushimaOshimaTakeda1994}.

We want to treat the evolution by constructing the corresponding Dirichlet form. 
Since the particles moving according to Brownian motion are only located in $\spt\mu$, we will interpret the classical Dirichlet form in $L_2(\Omega,\mu)$.
There is an abstract generating theorem to find generators associated to forms defined in different spaces in \cite{ArendtTerElst2012}; 
however, our approach is different in that we consider the form itself in the Hilbert space $L_2(\Omega,\mu)$ (where the generator should act in).
We will characterize the generating self-adjoint operator, and show that the corresponding $C_0$-semigroup is submarkovian.
The associated process is a jump-diffusion process, with a diffusion part on $\spt\mu$ and jumps through $\Omega\setminus\spt\mu$.

Such singular diffusions in one dimension and the form approach were described 
in 
\cite{SolomyakVerbitsky1995,Freiberg2003,Freiberg20042005,SeifertVoigt2011,
Seifert2009}, see also e.g.\ 
\cite{Ouhabaz2005} for form methods. As it turns out in one dimension, functions in the 
domain of the form (and hence also the operator) have to be affine on the 
complement of $\spt\mu$. Since in one dimension affine functions are exactly 
the harmonic functions, this will be the right condition occurring in higher 
dimensions.

In higher dimensions there are only few results in the 
literature, see \cite{NaimarkSolomyak1995,HuLauNgai2006, Seifert2009}, 
focussing on the construction of the operator (however under somewhat different
 assumptions; we will work with capacities).

In Section \ref{sec:DF} we describe the setup and interpret the classical Dirichlet form in $L_2(\Omega,\mu)$.
The generator is characterized in Section \ref{sec:operator}, where also properties of the associated semigroup are proven.
In Section \ref{sec:applications} we apply our result to two different situations. First we consider singular diffusion supported on a subset of codimension $1$.
Then we apply our results to diffusion on a fractal domain (we choose the Koch's snowflake here).

\section{Dirichlet forms for singular diffusion}
\label{sec:DF}

Let $\K\in\{\R,\C\}$ denote the field of scalars. We write $\lambda^n$ for the 
$n$-dimensional Lebesgue measure on $\R^n$.

Let $\Omega\subseteq \R^n$ be open and bounded. 
We define the classical Dirichlet form $\form_0$ on $\Omega$ by
\begin{align*}
  D(\form_0) & := W_{2,0}^1(\Omega),\\
  \form_0(u,v) & := \int_\Omega \grad u \cdot \overline{\grad v} \quad(u,v\in D(\form_0)).
\end{align*}
The corresponding form norm $\norm{\cdot}_{\form_0} := \bigl(\form_0(\cdot) + 
\norm{\cdot}_{L_2(\Omega,\lambda^n)}^2\bigr)^{1/2}$ is just the usual 
$W_2^1$-norm on $\Omega$,
where $\form_0(u):=\form_0(u,u)$. 

We will provide some notions from potential theory, which will be needed in the following.
For an open subset $V\subseteq\Omega$ we define 
\[\Cap(V):= \inf\set{\norm{u}_{\form_0}^2;\; u\in D(\form_0),\, u\geq 1 
\;\text{$\lambda^n$-a.e.\ on } V}.\]
For arbitrary $A\subseteq \Omega$ we set
\[\Cap(A):=\inf\set{\Cap(V);\; V\subseteq \Omega\,\text{open},\, A\subseteq V}.\]
Then $\Cap(A)$ is called the \emph{capacity} of $A$. We say that a property holds true \emph{quasi everywhere (q.e.)} if there exists $N\subseteq\Omega$ of zero capacity such that
the property is satisfied on $\Omega\setminus N$.

Let $(F_k)_{k\in\N}$ be a sequence of closed subsets of $\Omega$ satisfying $F_k\subseteq F_{k+1}$ for all $k\in\N$. Then $(F_k)$ is called a \emph{nest} if $\Cap(\Omega\setminus F_k)\to 0$.
If $(F_k)$ is a nest then we set
\[C((F_k)):= \set{u\from \Omega\to \K;\; u|_{F_k}\in C(F_k)\quad(k\in\N)}.\]
A function $u\from \Omega\to \K$ is said to be \emph{quasi-continuous} if there exists a nest $(F_k)$ such that $u\in C((F_k))$. 
Note that this is equivalent to saying that for any $\varepsilon>0$ there exists an open subset $U\subseteq \Omega$ such that $\Cap(U) <\varepsilon$ and $u|_{\Omega\setminus U} \in C(\Omega\setminus U)$.

\begin{proposition}[see {\cite[Theorem 2.1.3]{FukushimaOshimaTakeda1994}}]
  Every $u\in D(\form_0)$ admits a q.e.\ uniquely defined quasi-continuous representative $\tilde{u}$.
\end{proposition}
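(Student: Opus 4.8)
The plan is to establish two separate claims: the \emph{existence} of a quasi-continuous representative for every $u \in D(\form_0)$, and its \emph{q.e.\ uniqueness}. The natural strategy is to exploit the density of nice functions in the form space and to use the capacity to control convergence. Since $D(\form_0) = W^1_{2,0}(\Omega)$ with the form norm equal to the usual $W^1_2$-norm, the test functions $C^\infty_c(\Omega)$ are dense in $D(\form_0)$; every such test function is already continuous, hence trivially quasi-continuous. So the crux is to show that quasi-continuity is preserved under passage to limits in an appropriate sense.

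For \emph{existence}, I would first prove the key estimate linking the form norm to capacity: for $u \in D(\form_0)$ and $\alpha > 0$,
\[
  \Cap\bigl(\set{\abs{u} > \alpha}\bigr) \leq \tfrac{1}{\alpha^2}\norm{u}_{\form_0}^2,
\]
which is the Chebyshev-type inequality for capacity and follows directly from the definition of $\Cap$ as an infimum over functions dominating $1$ on the relevant set (the function $\abs{u}/\alpha$ is an admissible competitor on the open set where $\abs{u} > \alpha$). Given this, I would take a sequence $(u_k)$ in $C^\infty_c(\Omega)$ converging to $u$ in the form norm, and pass to a subsequence that converges rapidly, say $\norm{u_{k+1} - u_k}_{\form_0} \leq 2^{-k}$. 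Applying the capacity estimate to the differences $u_{k+1} - u_k$ with a suitably chosen threshold $\alpha_k$ (e.g.\ $\alpha_k = 2^{-k/2}$) and invoking the countable subadditivity of capacity, I would show that outside an open set of arbitrarily small capacity the series $\sum_k (u_{k+1} - u_k)$ converges uniformly. The uniform limit of continuous functions is continuous, so on the complement of these small-capacity sets the limit $\tilde{u}$ is continuous, which yields a nest and hence quasi-continuity of $\tilde{u}$; moreover $\tilde{u} = u$ as an $L_2$-element since uniform-outside-small-capacity convergence implies $\lambda^n$-a.e.\ convergence along the subsequence.

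For \emph{uniqueness}, suppose $\tilde{u}_1$ and $\tilde{u}_2$ are two quasi-continuous representatives of the same $u$. Their difference $w := \tilde{u}_1 - \tilde{u}_2$ is quasi-continuous and vanishes $\lambda^n$-a.e.; I would argue that a quasi-continuous function which is zero $\lambda^n$-a.e.\ must be zero q.e. The idea is that on the complement of a set of small capacity $w$ is continuous, and a continuous function vanishing on a set of full $\lambda^n$-measure (hence on a dense subset of the region where it is defined continuously) must vanish identically there; since the exceptional set can be taken of capacity below any $\eps > 0$, the set where $w \neq 0$ has zero capacity, i.e.\ $\tilde{u}_1 = \tilde{u}_2$ q.e.

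The main obstacle I anticipate is the careful handling of the capacity estimate together with the subadditivity and the choice of thresholds so that the Borel–Cantelli-type argument closes and genuinely produces a \emph{nest} in the precise sense defined above (closed sets $F_k$ with $\Cap(\Omega \setminus F_k) \to 0$), rather than merely a sequence of open exceptional sets. One must also verify that the quasi-continuity notion via nests matches the $\eps$-formulation stated in the excerpt, and confirm that the Chebyshev inequality for capacity is valid with the normalization chosen here; these are the technical points where constants and the open/closed bookkeeping require genuine care, even though each individual step is standard in the potential theory of Dirichlet forms.
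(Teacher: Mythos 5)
The paper offers no proof of this proposition---it is imported verbatim from \cite[Theorem 2.1.3]{FukushimaOshimaTakeda1994}---so your outline is to be measured against that standard argument, and your \emph{existence} half reproduces it correctly: the Chebyshev bound $\Cap(\set{\abs{v}>\alpha})\le \alpha^{-2}\norm{v}_{\form_0}^2$ for $v\in C_c^\infty(\Omega)$ (where the level set is open and $\abs{v}/\alpha$ is admissible, using $\norm{\abs{v}}_{\form_0}\le\norm{v}_{\form_0}$), applied to the differences of a rapidly convergent approximating sequence, together with countable subadditivity of $\Cap$, produces open sets $U_k=\bigcup_{j\ge k}\set{\abs{u_{j+1}-u_j}>\alpha_j}$ with $\Cap(U_k)\to0$, uniform convergence on the closed sets $F_k=\Omega\setminus U_k$, and hence a nest exhibiting the quasi-continuity of the limit; the identification $\tilde u=u$ $\lambda^n$-a.e.\ follows since zero capacity implies zero Lebesgue measure. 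That half is complete modulo routine bookkeeping.

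The \emph{uniqueness} half contains a genuine gap. You correctly reduce to the lemma that a quasi-continuous $w$ with $w=0$ $\lambda^n$-a.e.\ vanishes q.e.\ (this is \cite[Lemma 2.1.4]{FukushimaOshimaTakeda1994}), but your justification---that $w|_{\Omega\setminus U}$ is continuous and vanishes on a dense subset of $\Omega\setminus U$---does not hold as stated. The set $\set{w=0}$ has full Lebesgue measure in $\Omega$, yet it need not be dense in the closed set $\Omega\setminus U$ for a \emph{fixed} $\eps$: near a point $x\in\Omega\setminus U$ with $w(x)\ne0$, relative continuity only tells you that $w\ne0$ on $B(x,r)\cap(\Omega\setminus U)$, and this set may be Lebesgue-null (e.g.\ when $U$ swallows almost all of $B(x,r)$), so no contradiction with $w=0$ a.e.\ arises. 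The correct step is: for $\lambda>0$ put $A_\lambda=\set{\abs{w}>\lambda}$; by relative continuity $A_\lambda\setminus U=V\cap(\Omega\setminus U)$ for some open $V\subseteq\Omega$, so $A_\lambda\subseteq U\cup V$ and $V\setminus U\subseteq A_\lambda$ is Lebesgue-null; since the capacity of an open set constrains admissible functions only $\lambda^n$-a.e., $\Cap(U\cup V)=\Cap(U)<\eps$, hence $\Cap(A_\lambda)=0$ and $\set{w\ne0}=\bigcup_m A_{1/m}$ is capacity-null by subadditivity. With this repair your argument coincides with the cited proof.
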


We set (writing $\mathcal{B}(\Omega)$ for the Borel subsets of $\Omega$)
\begin{align*}
  M_0(\Omega):=\bigl\{& \mu\from\mathcal{B}(\Omega)\to [0,\infty];\; \mu \;\text{$\sigma$-additive},\\ 
  & \mu(N) = 0 \;\text{for any Borel set $N\subseteq \Omega$ of zero capacity}\bigr\}.
\end{align*}

It is easy to see that $\mu\in M_0(\Omega)$ if $\mu$ is absolutely continuous with respect to the Lebesgue measure $\lambda^n(\cdot\cap \Omega)$ on $\Omega$.
As shown in \cite[Theorem 4.1]{BrascheExnerKuperinSeba1994}, also the 
$(n-1)$-dimensional Hausdorff measure on $(n-1)$-dimensional $C^1$-submanifolds
 of $\Omega$ belongs to $M_0(\Omega)$.

Let $\mu\in M_0(\Omega)$ be a finite measure and $U:=\Omega\setminus\spt \mu$. 
The measure $\mu$ may be considered as a ``speed measure''.
Furthermore, we may assume
\begin{equation}
\label{eq:assumption}
  W_{2,0}^1(U) = \set{u\in W_{2,0}^1(\Omega);\; \tilde{u} = 0\,\text{$\mu$-a.e.}},
\end{equation}
where $\tilde{u}$ is a quasi-continuous representative of $u$.
Note that ``$\subseteq$'' is trivial; 
however, ``$\supseteq$'' does not hold in general, as the following example due to J\"urgen Voigt \cite{Voigt2009p} shows.

\begin{example}
  We start with a claim: Let $n\geq 2$, $\varepsilon>0$ and $r_0>0$. 
  Then there exist $0<r<r'\leq r_0$ and $\varphi\in C_c^1(\R^n)$ such that $\spt \varphi\subseteq B(0,r')$, $\1_{B[0,r]} \leq \varphi\leq 1$ and $\norm{\varphi}_{2,1} \leq\varepsilon$.
  Here $B(y,\rho)$ and $B[y,\rho]$ denote the open and closed balls around $y$ with radius $\rho$, respectively.
  
  Let $B_+:=\set{x\in B(0,1);\;x_1>0}$. Using the claim there exist $(x^k)$ in $B_+$, $(r_k)$ and $(r_k')$ in $(0,\infty)$ satisfying $r_k < r_k'$ for all $k\in\N$ and $(\varphi_k)$ in $C_c^1(\R^n)$ such that
  $\spt \varphi_k \subseteq B(x^k,r_k')$, $\1_{B[x^k,r_k]} \leq \varphi_k\leq 1$ such that
  \begin{itemize}
    \item the set of accumulation points of $(x^k)$ is exactly $\set{x\in B(0,1);\;x_1=0}$,
    \item $B(x^k,r_k')\cap B(x^j,r_j') = \varnothing$ for all $k,j\in\N$, $k\neq j$,
    \item $\sum_{k=1}^\infty \norm{\varphi_k}_{2,1}<\infty$.
  \end{itemize}
  Let $K:=\overline{\bigcup_{k\in\N} B[x^k,r_k']}$, $\Omega\supseteq K$ be open and bounded and $\mu$ the Lebesgue measure on $K$.
  Let $\varphi:=\sum_{k\in\N} \varphi_k$ and $\psi\in C_c^1(\R^n)$ such that $\psi = 1$ in a neighborhood of $K$. 
  Then $\psi-\varphi$ is quasi-continuous and $\psi-\varphi = 1$ on $\set{x\in B(0,1);\;x_1=0}$, a set with positive capacity.
  On the other hand, $\psi-\varphi = 0$ $\mu$-a.e., since $\psi -\varphi = 0$ on $\bigcup_{k\in\N} B[x^k,r_k]$ and the set 
  \[K\setminus \bigcup_{k\in\N} B[x^k,r_k] = \set{x\in B(0,1);\;x_1=0}\]
  has $\mu$-measure zero. Hence, $\psi-\varphi \in \set{u\in W_{2,0}^1(\Omega);\;\tilde{u} = 0\;\text{$\mu$-a.e.}}$, but
  \[\psi-\varphi \notin \set{u\in W_{2,0}^1(\Omega);\;\tilde{u} = 0\;\text{q.e. on $K$}}.\]
  By \cite[Theorem 1.13]{Hedberg1980} we observe
  \[\set{u\in W_{2,0}^1(\Omega);\;\tilde{u} = 0\;\text{q.e. on $K$}} = W_{2,0}^1(\Omega\setminus K).\]
  Thus, $\psi-\varphi\notin W_{2,0}^1(\Omega\setminus K)$.
\end{example}

\begin{remark}
\label{rem:general}
	In fact, we do not need condition \eqref{eq:assumption}. The subspace 
of $W_{2,0}^1(\Omega)$ we could work with is $\set{u\in W_{2,0}^1(\Omega);\;
 \tilde{u} = 0\,\text{$\mu$-a.e.}}^\bot$. As the following proposition shows, 
	in case \eqref{eq:assumption} is satisfied this subspace is exactly the 
space of $W_{2,0}^1(\Omega)$-functions, which are harmonic on 
$\Omega\setminus\spt\mu$.
\end{remark}

\begin{proposition}
  Let $\Omega\subseteq\R^n$ be open and bounded, $U\subseteq \Omega$ open.
  Then $W_{2,0}^1(\Omega) = W_{2,0}^1(U)\oplus D_{2,0}^1(U)$, where
  \[D_{2,0}^1(U):= \set{u\in W_{2,0}^1(\Omega);\; \Delta (u|_U) = 0}.\]
\end{proposition}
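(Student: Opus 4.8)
The plan is to recognize the claimed decomposition as the orthogonal splitting of a Hilbert space with respect to the Dirichlet inner product $\form_0$, and to identify the orthogonal complement of $W_{2,0}^1(U)$ with $D_{2,0}^1(U)$ via the weak formulation of the Laplace equation. First I would equip $W_{2,0}^1(\Omega)$ with the inner product $\form_0(u,v)=\int_\Omega \grad u\cdot\overline{\grad v}$. Since $\Omega$ is bounded, the Poincaré inequality gives $\norm{u}_{L_2(\Omega,\lambda^n)}\le C\,\form_0(u)^{1/2}$ for all $u\in W_{2,0}^1(\Omega)$, so $\form_0(\cdot)^{1/2}$ is a norm equivalent to the full $W_2^1$-norm, and $(W_{2,0}^1(\Omega),\form_0)$ is a Hilbert space.

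Next I would realize $W_{2,0}^1(U)$ as a subspace of $W_{2,0}^1(\Omega)$ through extension by zero on $\Omega\setminus U$. For $u\in W_{2,0}^1(U)$ one has $\grad u=0$ off $U$, hence $\form_0$ of the extension equals $\int_U\abs{\grad u}^2$; combined with the Poincaré inequality on the bounded set $U$, this shows that extension by zero is a topological isomorphism onto its image. Because $W_{2,0}^1(U)$ is complete, its image is a \emph{closed} subspace of $(W_{2,0}^1(\Omega),\form_0)$.

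The heart of the argument is the identification of the orthogonal complement. A function $u\in W_{2,0}^1(\Omega)$ satisfies $u\bot W_{2,0}^1(U)$ precisely when $\int_\Omega\grad u\cdot\overline{\grad v}=0$ for every $v\in W_{2,0}^1(U)$. Testing against $v\in C_c^\infty(U)$, which is dense in $W_{2,0}^1(U)$, this reads $\int_U\grad u\cdot\overline{\grad v}=0$, i.e.\ $\Delta(u|_U)=0$ in the sense of distributions on $U$. Conversely, any $u$ with $\Delta(u|_U)=0$ is $\form_0$-orthogonal to $C_c^\infty(U)$ and hence to its closure $W_{2,0}^1(U)$. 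Thus the orthogonal complement of $W_{2,0}^1(U)$ is exactly $D_{2,0}^1(U)$, and the orthogonal projection theorem yields $W_{2,0}^1(\Omega)=W_{2,0}^1(U)\oplus D_{2,0}^1(U)$.

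I expect the main obstacle to be the careful bookkeeping around the extension-by-zero embedding: verifying that $\grad$ of the zero-extension is the zero-extension of $\grad u$ (so that the embedding is $\form_0$-isometric up to equivalence and the image is closed), and confirming that density of $C_c^\infty(U)$ in $W_{2,0}^1(U)$ legitimately reduces the orthogonality condition to the distributional Laplace equation on $U$. Once these two points are secured, the decomposition is an immediate consequence of the Hilbert space projection theorem.
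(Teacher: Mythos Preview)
Your proof is correct and follows essentially the same approach as the paper: both use the Poincar\'e inequality to make the Dirichlet inner product a Hilbert space inner product and then obtain the decomposition via Hilbert space orthogonality, the paper phrasing this as Riesz representation on $W_{2,0}^1(U)$ while you phrase it as the projection theorem on $W_{2,0}^1(\Omega)$. The identification of the orthogonal complement with the weakly harmonic functions on $U$ is carried out the same way in both.
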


\begin{proof}
  Let $u\in W_{2,0}^1(\Omega)$. We show that there exists a unique $v\in W_{2,0}^1(U)$ such that
  \[0 = \int_U u\Delta \varphi - \int v\Delta \varphi \quad(\varphi\in C_c^\infty(U)).\]
  Then $Ju:=u-v\in D_{2,0}^1(U)$ and this implies the assertion.
  
  By Poincar\'{e}'s inequality we observe that
  \[(f,g)\mapsto \sp{f}{g}_0:=\int_U \grad f \cdot\overline{\grad g}\]
  defines an inner product on $W_{2,0}^1(U)$ such that this space becomes a Hilbert space.
  
  Since
  \[\abs{\int_U u\Delta\varphi} = \abs{\int_U \grad u \cdot \grad\varphi} \leq \norm{\abs{\grad u}}_{L_2(\Omega)} \norm{\varphi}_0 \quad(\varphi\in C_c^\infty(U)),\]
  the mapping $\varphi \mapsto -\int_U u \Delta\varphi$ is a continuous linear 
functional on $W_{2,0}^1(U)$. By Riesz' representation theorem there exists a
 unique $v\in W_{2,0}^1(U)$ such that
  \[\sp{\varphi}{\overline{v}}_0 = -\int_U u \Delta\varphi \quad(\varphi\in 
C_c^\infty(U)). \qedhere\]  
\end{proof}

Let $J\from W_{2,0}^1(\Omega)\to D_{2,0}^1(U)$ be the orthogonal projection. Then $\widetilde{Ju} = \tilde{u}$ $\mu$-a.e.

Let $D:=\set{u\in L_2(\Omega,\mu);\; \exists\, v\in W_{2,0}^1(\Omega): \tilde{v} = u\;\text{$\mu$-a.e.}}$.
Then $\iota\from D\to D_{2,0}^1(U)$, $\iota(u) := Jv$ is a well-defined linear mapping.

Define
\begin{align*}
  D(\form_D) & := D,\\
  \form_D(u,v) & := \int_\Omega \grad \iota(u)(x)\cdot \overline{\grad \iota(v)(x)}\, dx = \form_0(\iota(u),\iota(v)) \quad(u,v\in D(\form_D)).
\end{align*}

\begin{theorem}
  $\form_D$ is densely defined, symmetric, nonnegative and closed. Furthermore, $C_c^\infty(\Omega)$ is a core for $\form_D$.
\end{theorem}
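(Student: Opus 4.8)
The plan is to obtain symmetry and nonnegativity for free from the defining identity $\form_D(u,v) = \form_0(\iota(u),\iota(v))$: since $\iota$ is linear and $\form_0$ is symmetric and nonnegative, $\form_D(v,u) = \overline{\form_D(u,v)}$ and $\form_D(u,u) = \form_0(\iota(u)) \ge 0$. For dense definedness I would observe that each $\phi \in C_c^\infty(\Omega)$ lies in $W_{2,0}^1(\Omega)$ and is its own quasi-continuous representative, so its $\mu$-class $[\phi]_\mu$ belongs to $D$ (with witness $\phi$); since $\mu$ is a finite Borel measure on the open set $\Omega$, $C_c^\infty(\Omega)$ is $L_2(\Omega,\mu)$-dense, and hence so is $D$.

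For closedness, let $(u_k)$ be $\norm{\cdot}_{\form_D}$-Cauchy. Then $(u_k)$ is Cauchy in $L_2(\Omega,\mu)$, so $u_k \to u$ there, and $\form_0(\iota(u_k) - \iota(u_m)) = \form_D(u_k - u_m) \to 0$. By Poincar\'e's inequality $\form_0(\cdot)^{1/2}$ is a norm on $W_{2,0}^1(\Omega)$ equivalent to the $W_2^1$-norm, so $(\iota(u_k))$ converges in $W_{2,0}^1(\Omega)$ to some $w$; as $D_{2,0}^1(U)$ is the range of the continuous projection $J$, it is closed, whence $w \in D_{2,0}^1(U)$ and $Jw = w$. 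The decisive step is to match the two limits: convergence in $W_{2,0}^1(\Omega)$ forces quasi-continuous representatives to converge q.e.\ along a subsequence (see \cite[Theorem 2.1.4]{FukushimaOshimaTakeda1994}), hence $\mu$-a.e.\ because $\mu \in M_0(\Omega)$; using $\widetilde{Jv} = \tilde{v}$ $\mu$-a.e.\ this gives $\widetilde{\iota(u_k)} = u_k \to \tilde{w}$ $\mu$-a.e.\ on a subsequence, and since $u_k \to u$ in $L_2(\Omega,\mu)$ we conclude $\tilde{w} = u$ $\mu$-a.e. Thus $u \in D$ with witness $w$, so $\iota(u) = Jw = w = \lim \iota(u_k)$, giving $\form_D(u_k - u) \to 0$ and $u_k \to u$ in $\norm{\cdot}_{\form_D}$.

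For the core I would first reduce to bounded functions. Given $u \in D$ set $w := \iota(u) \in D_{2,0}^1(U)$ and, for $M > 0$, truncate $u_M := (-M)\vee u \wedge M$; since truncation operates on $W_{2,0}^1(\Omega)$, the function $w_M := (-M)\vee w \wedge M$ is a witness for $u_M$, and $w_M \to w$ in $W_2^1$ by dominated convergence applied to $w$ and $\grad w$. As $J$ is an $\form_0$-contraction and $Jw = w$, we get $\form_D(u_M - u) = \form_0(J(w_M - w)) \le \form_0(w_M - w) \to 0$, while $u_M \to u$ in $L_2(\Omega,\mu)$; hence the bounded elements of $D$ are $\form_D$-dense. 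For bounded $u$, choosing a bounded witness $w$ with $\norm{w}_\infty \le M$, I would then pick $\phi_k \in C_c^\infty(\Omega)$ with $\phi_k \to w$ in $W_2^1$ and $\sup_k \norm{\phi_k}_\infty < \infty$ (truncate-and-mollify a $W_2^1$-approximation, composing with a smooth $1$-Lipschitz cutoff fixing $0$). Then $\iota([\phi_k]_\mu) = J\phi_k \to Jw = \iota(u)$ in $\form_0$ by continuity of $J$, giving form convergence, whereas q.e.\ (hence $\mu$-a.e.) convergence of a subsequence, the uniform sup-bound, and finiteness of $\mu$ yield $[\phi_k]_\mu \to u$ in $L_2(\Omega,\mu)$ by dominated convergence. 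Combining the two steps shows $C_c^\infty(\Omega)$ is $\form_D$-dense in $D$.

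I expect the main obstacle to be precisely this $L_2(\Omega,\mu)$-convergence in the core step. The naive map $\phi \mapsto [\phi]_\mu$ from $(C_c^\infty(\Omega),\norm{\cdot}_{W_2^1})$ into $(D,\norm{\cdot}_{\form_D})$ is \emph{not} continuous, since $\int \abs{\phi}^2\,d\mu$ cannot be dominated by $\norm{\phi}_{W_2^1}^2$ for a general finite $\mu \in M_0(\Omega)$ when $n \ge 2$; this is exactly why the density of $C_c^\infty(\Omega)$ in $W_{2,0}^1(\Omega)$ does not transport directly, and why the $\mu$-convergence must be routed through truncation to a uniform sup-bound together with the q.e.-convergence theorem for quasi-continuous representatives. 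That same q.e.-convergence input is also the one nontrivial ingredient in the closedness argument.
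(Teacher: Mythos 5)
Your proposal is correct and follows essentially the same route as the paper: density via $C_c^\infty(\Omega)\subseteq D(\form_D)$, closedness via Poincar\'e's inequality plus q.e.-convergence of a subsequence of quasi-continuous representatives to identify the $L_2(\mu)$-limit with the $W_{2,0}^1(\Omega)$-limit, and the core property via truncation to bounded elements followed by a uniformly bounded smooth approximating sequence and dominated convergence. The only differences are cosmetic: you use closedness of the range of the projection $J$ where the paper tests against $\Delta\varphi$, and you truncate two-sidedly instead of first reducing to $u\ge 0$.
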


\begin{proof}
  $\form_D$ is densely defined since $C_c^\infty(\Omega) \subseteq D(\form_D)$ is dense in $L_2(\Omega,\mu)$.
  Symmetry and non-negativity is clear by definition. 
  To show closedness, let $(u_n)$ in $D(\form_D)$ be a $\form_D$-Cauchy sequence, i.e.\ $\form_D(u_n-u_m)\to 0$, and $u_n\to u$ in $L_2(\Omega,\mu)$.
  By Poincar\'{e}'s inequality there exists $v\in W_{2,0}^1(\Omega)$ such that $\iota(u_n)\to v$ in $W_{2,0}^1(\Omega)$. For $\varphi\in C_c^\infty(\Omega)$ we compute
  \[0 = \int_U \iota(u_n) \Delta \varphi \to \int_U v\Delta\varphi,\]
  i.e.\ $v\in D_{2,0}^1(U)$.
  
  There exists a subsequence $(u_{n_k})$ such that $\widetilde{\iota(u_{n_k})}\to \tilde{v}$ q.e., and hence also $\mu$-a.e. 
  Since $\widetilde{\iota(u_n)} = u_n$ $\mu$-a.e.\ we observe $\tilde{v} = u$ $\mu$-a.e. Hence, $u\in D(\form_D)$ and
  \[\form_D(u_n-u) = \form_0(\iota(u_n)-v) \to 0.\]
  
  Let us now show that $C_c^\infty(\Omega)$ is a core for $\form_D$. 
  It suffices to approximate $0\leq u \in D(\form_D)$. First, assume that $u\in L_\infty(\mu)$.
  Then there exists a sequence $(\varphi_l)$ in $C_c^\infty(\Omega)$ such that $\varphi_l\to \iota(u)$ in $W_{2,0}^1(\Omega)$, 
  $\varphi_l\to \widetilde{\iota(u)}$ q.e.\ and $M:=\sup\set{\norm{\varphi_l}_{\infty,\spt\mu};\; l\in\N} <\infty$. 
  Since $\mu\in M_0(\Omega)$ we also have $\varphi_l\to \widetilde{\iota(u)}$ $\mu$-a.e., and since $\widetilde{\iota(u)} = u$ $\mu$-a.e.\ also $\varphi_l\to u$ $\mu$-a.e.
  Since $\abs{\varphi_l}\leq M\1_\Omega \in L_2(\mu)$ Lebesgue's dominated convergence theorem yields $\varphi_l\to u$ in $L_2(\mu)$, and therefore
  $\varphi_l\to u$ in $D_{\form_D} = (D(\form_D),\norm{\cdot}_{\form_D})$.
  
  For general $0\leq u \in D(\form_D)$ there exists $0\leq v\in W_{2,0}^1(\Omega)$ such that $\tilde{v} = u$ $\mu$-a.e. Then, for $k\in\N$, we have $u_k:=u\wedge k\in D(\form_D)$,
  where $f \wedge g:= \min\set{f,g}$ denotes the minimum,
  since $v\wedge k\in W_{2,0}^1(\Omega)$ and $\widetilde{v\wedge k} = \tilde{v}\wedge k = u_k$ $\mu$-a.e.
  Hence, for $k\in \N$ there exists $(\varphi_l^k)_l$ in $C_c^\infty(\Omega)$ such that $\varphi_l^k\to \iota(u_k)$ in $W_{2,0}^1(\Omega)$ and $\varphi_l^k\to u_k$ in $L_2(\mu)$.
  Since $u_k\to u$ in $L_2(\mu)$ and $v\wedge k\to v$ in $W_{2,0}^1(\Omega)$ we
 also have $\iota(u_k) = J(v\wedge k)\to Jv = \iota(u)$ in 
$W_{2,0}^1(\Omega)$. Hence, $u_k\to u$ in $D_{\form_D}$.
  Thus, a suitable subsequence of $(\varphi_l^k)$ converges to $u$ in $D_{\form_D}$.
\end{proof}

\section{Characterization of the operator}
\label{sec:operator}

Let $H$ be the self-adjoint operator in $L_2(\Omega,\mu)$ associated with $\form_D$, where $\Omega$ and $\mu$ are as in the previous section.

\begin{definition}
  Let $F\in L_{1,\loc}(\Omega;\K^n)$, $g\in L_{1}(\Omega,\mu)$. Then $g$ is 
called \emph{distributional divergence} of $F$ with respect to $\mu$, denoted
 by $\div_\mu F= g$, if
  \[\int_\Omega F(x) \grad \varphi(x)\, dx = - \int_\Omega g(x) \varphi(x)\, d\mu(x) \quad(\varphi\in C_c^\infty(\Omega)).\]
\end{definition}

\begin{theorem}
\label{thm:operator}
  We have
  \begin{align*}
    D(H) & = \set{u\in D(\form_D);\; \div_\mu \grad \iota(u)\in L_2(\Omega,\mu)},\\
    Hu & = - \div_\mu \grad \iota(u) \quad(u\in D(H)).
  \end{align*}
\end{theorem}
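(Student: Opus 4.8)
The plan is to use the standard characterization of the self-adjoint operator associated with a closed, densely defined, symmetric, nonnegative form: namely that $u \in D(H)$ with $Hu = w$ if and only if $u \in D(\form_D)$ and there exists $w \in L_2(\Omega,\mu)$ such that $\form_D(u,v) = \sp{w}{v}_{L_2(\Omega,\mu)}$ for all $v \in D(\form_D)$. Since $C_c^\infty(\Omega)$ is a core for $\form_D$ (by the previous theorem), it suffices to test this identity against $v = \varphi \in C_c^\infty(\Omega)$; for such $\varphi$ we have $\iota(\varphi) = J\varphi$, and because $\varphi - J\varphi \in W_{2,0}^1(U)$ is harmonic-orthogonal, the first step would be to verify that $\form_0(\iota(u), J\varphi) = \form_0(\iota(u), \varphi)$, so that $\form_D(u,\varphi) = \int_\Omega \grad \iota(u) \cdot \overline{\grad \varphi}$.

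With that reduction in hand, the definition of $\div_\mu$ translates the condition ``$\exists\, w \in L_2(\Omega,\mu)$ with $\form_D(u,\varphi) = \sp{w}{\varphi}_{L_2(\mu)}$ for all $\varphi$'' directly into ``$\div_\mu \grad \iota(u)$ exists and equals $-w$, and lies in $L_2(\Omega,\mu)$.'' Concretely, I would first establish the inclusion ``$\subseteq$'': given $u \in D(H)$, set $w := Hu \in L_2(\Omega,\mu)$ and use $\form_D(u,\varphi) = \sp{w}{\varphi}_{L_2(\mu)}$ together with the first-step identity to obtain $\int_\Omega \grad \iota(u) \cdot \grad \overline{\varphi} = \int_\Omega w \overline{\varphi}\, d\mu$ for all real test functions, which is exactly the defining equation of $\div_\mu \grad \iota(u) = -w \in L_2(\Omega,\mu)$. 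Conversely, for ``$\supseteq$'', if $g := \div_\mu \grad \iota(u) \in L_2(\Omega,\mu)$, then the defining identity gives $\form_D(u,\varphi) = -\sp{g}{\varphi}_{L_2(\mu)}$ for all $\varphi \in C_c^\infty(\Omega)$, and density of the core extends this to all $v \in D(\form_D)$, so $u \in D(H)$ with $Hu = -g$.

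The main obstacle I anticipate is the first step, the passage $\form_0(\iota(u), \varphi) = \form_0(\iota(u), J\varphi)$, i.e.\ showing we may replace $\iota(\varphi) = J\varphi$ by $\varphi$ itself inside the form. This rests on the orthogonal decomposition $W_{2,0}^1(\Omega) = W_{2,0}^1(U) \oplus D_{2,0}^1(U)$: the difference $\varphi - J\varphi$ lies in $W_{2,0}^1(U)$, while $\iota(u) \in D_{2,0}^1(U)$, so $\form_0(\iota(u), \varphi - J\varphi) = 0$ by orthogonality with respect to $\form_0$. Care is needed because $D_{2,0}^1(U)$ is defined as the orthogonal complement of $W_{2,0}^1(U)$ precisely in the $\form_0$-inner product, which is what makes this vanishing clean; the conjugation in the complex case should be tracked but causes no real difficulty. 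Once this identity is secured, the equivalence between the abstract form characterization of $H$ and the $\div_\mu$ condition is a direct transcription, and closing the argument only requires invoking that $C_c^\infty(\Omega)$ is a core to move freely between testing against smooth functions and against all of $D(\form_D)$.
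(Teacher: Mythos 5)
Your proposal is correct and follows essentially the same route as the paper: both hinge on the identity $\form_D(u,\varphi)=\form_0(\iota(u),\varphi)$ for $\varphi\in C_c^\infty(\Omega)$, obtained from $\varphi-J\varphi\in W_{2,0}^1(U)$ being $\form_0$-orthogonal to $\iota(u)\in D_{2,0}^1(U)$, and then translate the defining relation of the associated operator into the defining relation of $\div_\mu\grad\iota(u)$, using that $C_c^\infty(\Omega)$ is a core to pass between test functions and all of $D(\form_D)$.
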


\begin{proof}
  First note that for $u\in D(\form_D)$ and $\varphi \in C_c^\infty(\Omega)$ we have
  \[\form_0(\iota(u),\varphi) = \int_\Omega \grad\iota(u)\cdot \overline{\grad\varphi} = \int_\Omega \grad\iota(u)\cdot \overline{\grad\iota(\varphi)} = \form_D(u,\varphi).\]
  Indeed, since $\iota(\varphi) = J\varphi$ and $\varphi-J\varphi\in W_{2,0}^1(U)$, we obtain
  \[\int_\Omega \grad\iota(u) \cdot \overline{\grad (\varphi-J\varphi)} = 0.\]
  
  Let $H_1$ be the operator defined by the right-hand side in the theorem.
  Let $u\in D(H_1)$ and $\varphi\in C_c^\infty(\Omega)$. Then by the above we have
  \[\form_D(u,\varphi) = \int_\Omega \grad\iota(u)\cdot \overline{\grad\varphi} = -\int_\Omega \div_\mu \grad\iota(u) \overline{\varphi}\, d\mu = \sp{H_1 u}{\varphi}.\]
  By continuity we obtain
  \[\sp{H_1 u}{v} = \form_D(u,v) \quad(v\in D(\form_D)).\]
  Thus, $u\in D(H)$ and $Hu = H_1u$.
  
  To show the converse inclusion let $u\in D(H)\subseteq D(\form_D)$ and $\varphi\in C_c^\infty(\Omega)$. Then
  \begin{align*}
    \int_\Omega \grad\iota(u)\cdot \overline{\grad \varphi} & = \int_\Omega \grad\iota(u)\cdot \overline{\grad \iota(\varphi)} = \form_D(u,\varphi) = \sp{Hu}{\varphi} \\
    & = \int_\Omega Hu \overline{\varphi}\, d\mu.
  \end{align*}
  Hence, $\div_\mu \grad \iota(u)$ exists and $\div_\mu \grad \iota(u) = - Hu\in L_2(\Omega,\mu)$. Thus, $u\in D(H_1)$ and $H_1 u = Hu$.  
\end{proof}

\begin{remark}
 The operator $H$ is the multidimensional analogue of the operator 
$-\partial_\mu\partial \iota $ with Dirichlet boundary conditions, see 
\cite{kkvw09,Seifert2009,SeifertVoigt2011} and also 
\cite{Freiberg2003,Freiberg20042005}.
\end{remark}

We now focus on properties of the semigroup $(e^{-tH})_{t\geq0}$. 
A $C_0$-semigroup $T\from [0,\infty)\to L(L_2(\mu))$ of bounded linear operators in $L_2(\mu)$ is called \emph{positive}, if $T(t)f\geq0$ for all $0\leq f\in L_2(\mu)$, $t\geq0$.
The semigroup is called \emph{submarkovian}, if it is positive and $L_\infty$-contractive, i.e., $f\in L_2(\mu)$, $0\leq f\leq1$ implies $0\leq T(t)f\leq 1$ for all $t\geq 0$.

\begin{theorem}
  The $C_0$-semigroup $(e^{-tH})_{t\geq0}$ is submarkovian.
\end{theorem}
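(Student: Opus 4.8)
The plan is to verify the Beurling--Deny criterion for submarkovianity. By the previous theorem $\form_D$ is densely defined, symmetric, nonnegative and closed, so its semigroup is submarkovian if and only if the \emph{unit contraction operates}, i.e.\ for every real $u\in D(\form_D)$ one has $(0\vee u)\wedge 1\in D(\form_D)$ and $\form_D\bigl((0\vee u)\wedge 1\bigr)\le\form_D(u)$ (see \cite{FukushimaOshimaTakeda1994} and \cite{Ouhabaz2005}). Since submarkovianity is tested on real functions, I would assume $\K=\R$, the complex case reducing to real and imaginary parts. Write $C(s):=(0\vee s)\wedge 1$, a normal contraction ($C(0)=0$ and $\abs{C(s)-C(t)}\le\abs{s-t}$).

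The key idea is to represent $u$ by its \emph{harmonic} representative. Fix real $u\in D(\form_D)$ and set $h:=\iota(u)\in D_{2,0}^1(U)$, which is real-valued. By the remark following the decomposition proposition, $\tilde h=\widetilde{\iota(u)}=u$ $\mu$-a.e., and by definition $\form_D(u)=\form_0(h)$. Since $\form_0$ is the classical Dirichlet form on $W_{2,0}^1(\Omega)$, for which every normal contraction operates, we have $C(h)\in W_{2,0}^1(\Omega)$ and $\form_0\bigl(C(h)\bigr)\le\form_0(h)$. Moreover the quasi-continuous representative of $C(h)$ equals $C(\tilde h)$ q.e.\ (composition of the continuous map $C$ with a quasi-continuous function, unique q.e.), hence $\widetilde{C(h)}=C(\tilde h)=(0\vee u)\wedge 1$ $\mu$-a.e., using $\mu\in M_0(\Omega)$. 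Consequently $(0\vee u)\wedge 1\in D(\form_D)$ with $\iota\bigl((0\vee u)\wedge 1\bigr)=J\bigl(C(h)\bigr)$.

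It then remains to estimate the energy. Because the decomposition $W_{2,0}^1(\Omega)=W_{2,0}^1(U)\oplus D_{2,0}^1(U)$ is orthogonal with respect to $\form_0(\cdot,\cdot)$ (a genuine inner product on $W_{2,0}^1(\Omega)$ by Poincar\'{e}'s inequality), the map $J$ is an $\form_0$-orthogonal projection and therefore an $\form_0$-contraction, $\form_0(Jf)\le\form_0(f)$. Chaining the estimates gives
\[\form_D\bigl((0\vee u)\wedge 1\bigr)=\form_0\bigl(J(C(h))\bigr)\le\form_0\bigl(C(h)\bigr)\le\form_0(h)=\form_D(u),\]
which is exactly the required inequality, so the semigroup is submarkovian.

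The main obstacle, and the reason the harmonic representative is essential, is that the two contractions at play both only decrease $\form_0$-energy while $\form_0$ \emph{overestimates} $\form_D$ on a generic representative: for an arbitrary $v\in W_{2,0}^1(\Omega)$ with $\tilde v=u$ $\mu$-a.e.\ one only has $\form_0(v)\ge\form_0(Jv)=\form_D(u)$, so the naive chain $\form_0(J(C(v)))\le\form_0(C(v))\le\form_0(v)$ terminates at the too-large bound $\form_0(v)$. Starting instead from $h=\iota(u)$, where $\form_0(h)=\form_D(u)$ with equality, removes this slack and makes the chain close. The remaining verifications are routine: that $C(\tilde h)$ is the quasi-continuous representative of $C(h)$, and that $\form_0$ is a Dirichlet form, standard for the Dirichlet Laplacian.
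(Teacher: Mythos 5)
Your proof is correct and follows essentially the same route as the paper: both arguments apply the contraction to the harmonic representative $\iota(u)=Ju$ and then chain the two energy-decreasing facts, namely that normal contractions operate on the classical form $\form_0$ and that $J$ is an $\form_0$-orthogonal projection. The only cosmetic difference is that the paper tests smooth normal contractions on the core $C_c^\infty(\Omega)$ and records the identity $J(F\circ u)=J(F\circ Ju)$, whereas you test the unit contraction on all of $D(\form_D)$ and obtain the same identity implicitly by choosing $C(h)$ as the representative of $(0\vee u)\wedge 1$.
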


\begin{proof}
  We have to check the Beurling-Deny criteria for the corresponding Dirichlet form $\form_D$. Note that it suffices to check it with $C^\infty$-normal contractions.
  Let $F$ be a $C^\infty$-normal contraction and $u\in C_c^\infty(\Omega)$. Then $F\circ u \in C_c^\infty(\Omega)\subseteq D(\form_D)$. 
  
  We show that $J(F\circ u) = J(F\circ Ju)$.
  Indeed, since $\widetilde{F\circ u} = F\circ u = F\circ\widetilde{Ju} = \widetilde{F\circ Ju}$ $\mu$-a.e. we obtain $F\circ u - F\circ(Ju)\in W_{2,0}^1(U)$. Thus,
  \[0 = J\bigl(F\circ u - F\circ(Ju)\bigr) = J(F\circ u) - J(F\circ(Ju)).\]
  
  We now compute
  \begin{align*}
    \form_D(F\circ u) & = \form_0(\iota(F\circ u)) = \form_0(J(F\circ u)) = \form_0(J(F\circ(Ju))) \\
    & \leq \form_0(F\circ (Ju)) \leq \form_0(Ju) = \form_0(\iota(u)) = 
\form_D(u). \qedhere
  \end{align*}
\end{proof}

\begin{remark}
  In \cite[Section 6.2]{FukushimaOshimaTakeda1994} the traces of Dirichlet forms and associated processes were considered.
  Our result characterizes the corresponding generating operator $H$ in case of (suitably scaled) Brownian motion on a bounded domain $\spt\mu$, 
  where $\mu$ is the corresponding volume measure (i.e.\ Lebesgue measure). The process may jump through $\Omega\setminus \spt\mu$, however (due to the Dirichlet boundary condition at $\partial\Omega$) gets killed on $\partial\Omega$.
\end{remark}

\section{Applications}
\label{sec:applications}

We will now show two applications. Note that by Remark \ref{rem:general} in 
fact we only need to prove $\mu\in M_0(\Omega)$. However, we will also show  
``$\supseteq$'' in \eqref{eq:assumption} (so that equality in 
\eqref{eq:assumption} holds).

Note that for an open subset $V\subseteq \R^n$ we have
\[W_{2,0}^1(V) = \set{u|_V;\; u\in W_{2}^1(\R^n),\, \tilde{u} = 0\,\text{q.e.\ on } \partial V},\]
see e.g.\ \cite[Theorem 2.5]{Frehse1982} and \cite[Theorem 4.2]{BrewsterMitreaMitreaMitrea2012}.

\begin{example}
  Let $n\geq 2$, $\Omega:=(-1,1)^n\subseteq \R^n$, 
$\Gamma:=\Omega\cap(\R^{n-1}\times\set{0})$ and $\mu:=\lambda^{n-1}(\cdot\cap
 \Gamma)$ be the $(n-1)$-dimensional Lebesgue measure on $\Gamma$.
  Then $\mu\in M_0(\Omega)$ by \cite[Theorem 4.1]{BrascheExnerKuperinSeba1994}.
 We will show the equality in \eqref{eq:assumption}.
  Write $\Omega_+:=\Omega\cap(\R^{n-1}\times(0,\infty))$ and $\Omega_-:=\Omega\cap(\R^{n-1}\times(-\infty,0))$.
  
    \begin{figure}[htb]
  \centering
    \begin{tikzpicture}
      \draw (-1,-1)--(-1,2)--(2,2)--(2,-1)--(-1,-1);
      \draw (0.5,-0.5) node{$\Omega_-$};
      \draw (0.5,1.5) node{$\Omega_+$};
      \draw[thick] (-1,0.5)--(2,0.5);
      \draw (-0.8,0.7) node{$\Gamma$};
    \end{tikzpicture}
    \caption{The hypercube $\Omega$, divided into two parts $\Omega_+$ and $\Omega_-$ by the hyperplane $\Gamma$.}
   \end{figure}
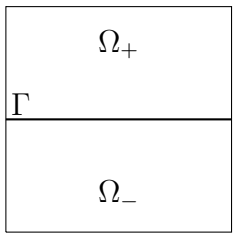
  
  Let $u\in W_{2,0}^1(\Omega)$, $\tilde{u} = 0$ $\mu$-a.e. 
  There exists $(\varphi^k)$ in $C_c^\infty(\Omega)$ such that $\varphi^k\to u$ in $W_2^1(\Omega)$ and $\varphi^k\to \tilde{u}$ q.e.
  Thus, also $\varphi^k(\cdot,0) \to \tilde{u}(\cdot,0) = 0$ $\lambda^{n-1}$-a.e.
  
  For $v\in L_2(\Omega)$ let
  \[Ev(x):=\begin{cases}
	    v(x) & x\in\Omega,\\
	    0 & \R^n\setminus\Omega
           \end{cases}\]
  be the extension of $v$ by zero, and $v_+:=(Ev)|_{\R^{n-1}\times(0,\infty)}$.
  
  We obtain $\varphi^k_+\to u_+$ in $W_{2}^1(\R^{n-1}\times(0,\infty))$. By \cite[Theorem 5.36]{AdamsFournier2003} there exists a bounded linear trace operator $\tr\from W_{2}^1(\R^{n-1}\times(0,\infty))\to L_2(\R^{n-1})$. Hence,
  $\tr\varphi^k_+\to \tr u_+$ in $L_2(\R^{n-1})$. Since also $\tr \varphi^k_+ = \varphi^k(\cdot,0)\to \tilde{u}_+(\cdot,0) = 0$ $\lambda^{n-1}$-a.e.\ we obtain $\tr u_+ = 0$.
  By \cite[Theorem 5.37]{AdamsFournier2003} we obtain $u_+\in W_{2,0}^1(\R^{n-1}\times(0,\infty))$. 
  Two applications of \cite[Theorem 5.29]{AdamsFournier2003} finally yield $u|_{\Omega_+}\in W_{2,0}^1(\Omega_+)$.
  Analogously, $u|_{\Omega_-}\in W_{2,0}^1(\Omega_-)$, and hence $u\in W_{2,0}^1(\Omega)$.  

  Thus the corresponding stochastic process describes a particle diffusing in the hyperplane and jumping through $\Omega$.  
\end{example}

\begin{example}
  Let $D$ be the filled (open) Koch's snowflake centered at the origin and 
$\Omega\subseteq \R^2$ be a large open square centered at the origin such 
that $\overline{D}\subseteq \Omega$.
  Let $\mu:=\lambda^2(\cdot\cap D)$ be the Lebesgue measure on $D$.
  
  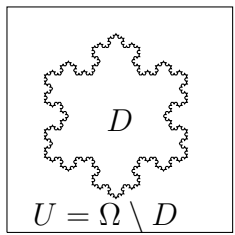
\begin{figure}[htb]
  \centering
  \usetikzlibrary{lindenmayersystems}
  \pgfdeclarelindenmayersystem{Koch curve}{\rule{F -> F-F++F-F}}
  \begin{tikzpicture}
    \draw (-1,-1)--(-1,2)--(2,2)--(2,-1)--(-1,-1);
    \begin{scope}[xshift=-0.5cm, yshift=0cm]
    \draw[scale=1/3] [l-system={Koch curve, step=2pt, angle=60, axiom=F++F++F, order=4}] lindenmayer system -- cycle;     
    \end{scope}
    \draw (-0.8,-0.8) node[right]{$U = \Omega\setminus D$};
    \draw (0.5,0.5) node{$D$};
    
  \end{tikzpicture}
  \caption{The square $\Omega$ and the snowflake $D$.}
  \end{figure}

  Then $\mu\in M_0(\Omega)$. We show equality in \eqref{eq:assumption}.
  Let $u\in W_{2,0}^1(\Omega)$, $\tilde{u} = 0$ $\mu$-a.e. 
  By \cite[Theorem 5.29]{AdamsFournier2003} the extension of $u$ by zero yields
 $u\in W_2^1(\R^2)$.
  By \cite[Theorem 2.5]{Frehse1982} we observe $\tilde{u}=0$ q.e.\ on $\partial\Omega$.
  
  Since $u|_D = 0$ $\lambda^2$-a.e., we have $\tr (u|_D) = 0$ 
$\mathcal{H}^d$-a.e.\ on the boundary of $D$ by \cite[Theorem 2]{Wallin1991},
 where $\mathcal{H}^d$ is the $d$-dimensional Hausdorff measure with 
$d=\frac{\log 4}{\log 3}$.
  By \cite[Corollary 4.5]{BrewsterMitreaMitreaMitrea2012} we thus obtain $\tilde{u} = 0$ q.e.\ on $\partial D$.
  
  Hence, for $U:=\Omega\setminus D$ we obtain $\tilde{u} = 0$ q.e.\ on $\partial U$, which by \cite[Theorem 2.5]{Frehse1982} yields $u\in W_{2,0}^1(U)$.
  
  We can thus describe jump-diffusion, where the diffusion takes part on the snowflake $D$ and jumps may occur along its boundary $\partial D$.
\end{example}

\section*{Acknowledgement}

C.S.\ warmly thanks J\"urgen Voigt and Hendrik Vogt who led him to the topic.

% \begin{definition}
%   Let $E\subseteq \R^n$ be measurable, $0<d\leq n$. Then $E$ is called a $d$-set if there exist $c_1,c_2>0$ such that
%   \[c_1r^d\leq \H^d(E\cap B[x,r])\leq c_2r^d \quad(x\in E, 0<r\leq1).\]
%   
%   Let $\Omega\subseteq \R^n$ be open and connected, $\varepsilon>0$, $\delta\in (0,\infty]$. We say that $\Omega$ is a $(\varepsilon,\delta)$-domain if for all $x,y\in\Omega$ satisfying $\abs{x-y}<\delta$ there exists a rectifiable arc $\gamma \subseteq \Omega$ joining $x$ to $y$ such that
%   \begin{enumerate}
%     \item
%       $\ell(\gamma) \leq \frac{\abs{x-y}}{\varepsilon}$, and
%     \item
%       $d(z,\gamma) \geq \varepsilon \frac{\abs{x-z} \abs{y-z}}{\abs{x-y}}$ for all $z\in\gamma$.
%   \end{enumerate}
%   
%   Let $F\subseteq\R^n$ be closed. Then $F$ preserves the Markov's inequality if for every $k\in\N$ there exists $c\in\R$ such that
%   \[\max_{F\cap B[x,r]} \abs{\grad P} \leq \frac{c}{r} \max_{F\cap B[x,r]} \abs{P}\]
%   for all polynomials $P\in \mathcal{P}_k$ of $n$ real variables and of total degree at most $k$, $x\in F$ and $0<r\leq 1$.
% \end{definition}
% 
% 
% \begin{proposition}
%   
% \end{proposition}

\bigskip

\noindent
Uta Renata Freiberg \\
Universit\"at Stuttgart \\
Fachbereich Mathematik \\
Institut f\"ur Stochastik und Anwendungen \\
Pfaffenwaldring 57 \\
70569 Stuttgart, Germany \\
{\tt uta.frei\rlap{\textcolor{white}{hugo@egon}}berg@mathematik.uni-\rlap{\textcolor{white}{darmstadt}}stuttgart.de} \\[1em]
\noindent
Christian Seifert\\
Technische Universit\"at Hamburg-Harburg\\
Institut f\"ur Mathematik \\
Schwarzenbergstra{\ss}e 95 E \\
21073 Hamburg, Germany \\
{\tt christian.se\rlap{\textcolor{white}{hugo@egon}}ifert@tuhh\rlap{\textcolor{white}{darmstadt}}.de}

\end{document}